\newtheorem{thm}{Theorem}
\newtheorem{lem}[thm]{Lemma}
\newcommand{\GCD}{{\rm GCD}}
\newcommand{\id}{{\rm id}}
\newcommand{\Z}{\mathbb{Z}}
\begin{document}

\title{Triangular involutions of the four-dimensional polynomial ring in characteristic two}
\subjclass[2020]{Primary 13A50, Secondary 14R10}
\keywords{Involutions, Triangular automorphisms, Characteristic two}
\author[Ryuji Tanimoto]{Ryuji Tanimoto}
\address{Faculty of Education, Shizuoka University, 836 Ohya, Suruga-ku, Shizuoka 422-8529, Japan} 
\email{tanimoto.ryuji@shizuoka.ac.jp}
\maketitle

\begin{abstract}
We are concerned with polynomial involutions in characteristic two. 
In this note, we look for involutions among triangular automorphisms of the four-dimensional 
polynomial ring in characteristic two and obtain three types of 
such involutions. 
\end{abstract}

\section*{Introduction}

Let $k$ be a field and let $k[x_1, \ldots , x_n]$ be the polynomial ring in $n$ variables over $k$. 
A $k$-algebra automorphism $\sigma$ of $k[x_1, \ldots, x_n]$ is said to be 
{\it triangular} if $\sigma(x_i) = \lambda_i \, x_i + \phi_i$, where 
$\lambda_i \in k \backslash \{ 0 \}$ and $\phi_i \in k[x_1, \ldots, x_{i - 1}]$ 
for all $1 \leq i \leq n$.  
A $k$-algebra automorphism $\sigma$ of $k[x_1, \ldots, x_n]$ is said to be 
an {\it involution} if $\sigma^2 = \id_{k[x_1, \ldots, x_n]}$. 

We delightfully look for involutions of $k[x_1, \ldots, x_n]$ in characteristic two. 
We know the following:
\begin{itemize}
\item If $n = 1$, any involution $\sigma$ of $k[x_1]$ has the form $\sigma(x_1) = x_1 + \phi_1$, where $\phi_1 \in k$. So, any involution of $k[x_1]$ is triangular.  

\item If $n = 2$, any involution of $k[x_1, x_2]$ becomes triangular by changing the coordinates if necessary (cf. \cite{Miyanishi}). 
This result is based on the structure theorem for the automorphism group of $k[x_1, x_2]$ (cf. \cite{Jung, van der Kulk}). 
Triangular involutions of $k[x_1, x_2]$ are described (cf. \cite[Lemma 5]{Tanimoto 2021}).  

\item If $n = 3$, triangular involutions of 
$k[x_1, x_2, x_3]$ are described (cf. 
\cite[Theorem 3.1]{Tanimoto 2018}). 

\item We have a method for constructing triangular involutions of $k[x_1, \ldots, x_n]$ 
from triangular involutions of $k[x_1, \ldots, x_{n - 1}]$, 
where $n \geq 2$ (cf. \cite{Tanimoto 2021}). 
\end{itemize}

Now, let $k[x, y, z, w]$ be the polynomial ring in four variables over $k$. 
We define three triangular automorphisms of $k[x, y, z, w]$ with special forms, 
as follows:  
\begin{enumerate}[label ={\rm (\roman*)}]
\item Given a polynomial $f \in k[x, y, z]$, 
we can define a triangular automorphism $T$ of $k[x, y, z, w]$ as 
\[
\left\{
\begin{array}{c @{\,} l}
 T(x) & := x , \\
 T(y) & := y , \\
 T(z) & := z  , \\
 T(w) & := w + f . 
\end{array}
\right.
\]

\item Given polynomials $\xi \in k[x, y] \backslash \{ 0 \}$ 
and $\eta \in k[x, y, z]$, 
we can define a triangular automorphism $T$ of $k[x, y, z, w]$ as 
\[
\left\{
\begin{array}{c @{\,} l}
 T(x) & := x , \\
 T(y) & := y , \\
 T(z) & := z + \xi , \\
 T(w) & := w + \eta( x, \, y, \, z^2 + \xi \, z) . 
\end{array}
\right.
\]

\item Given polynomials $\alpha \in k[x] \backslash \{ 0 \}$, 
$\beta \in k[x, y] \backslash \{ 0 \}$ and $\gamma \in k[x, y, z, w]$, 
we can define a triangular automorphism $T$ of $k[x, y, z, w]$ as 
\[
\left\{
\begin{array}{c @{\,} l}
 T(x) & := x , \\
 T(y) & := y + \alpha(f_1) , \\
 T(z) & := z + \beta(f_1, \, f_2), \\
 T(w) & := w + \gamma(f_1, \, f_2, \, f_3, \, f_4) , 
\end{array}
\right.
\]
where 
$d := \GCD_{k[x, y]}(\alpha, \beta)$, 
$a := \alpha/d$, $b := \beta/d$ 
and 
\[
\left\{
\begin{array}{c @{\,} l}
 f_1 & := x , \\
 f_2 & := y^2 + \alpha \, y , \\
 f_3 & := z^2 + \beta(x, \, y^2 + \alpha \, y) \, z , \\
 f_4 & := a \, z + b(x, \, y^2 + \alpha \, y) \, y. 
\end{array}
\right.
\]
\end{enumerate}

\begin{thm}
Assume that the characteristic of $k$ is two. 
Then the following assertions {\rm (1)} and {\rm (2)} hold true: 
\begin{enumerate}[label = {\rm (\arabic*)}]
\item A triangular automorphism $T$ of $k[x, y, z, w]$ 
with any one of the above forms {\rm (i)}, {\rm (ii)}, {\rm (iii)} is an involution. 

\item For any triangular automorphism $\tau$ of $k[x, y, z, w]$, the 
following conditions {\rm (2.1)} and {\rm (2.2)} are equivalent: 
\begin{enumerate}[label = {\rm (2.\arabic*)}]
\item $\tau$ is an involution. 

\item There exist an automorphism $\varphi$ of $k[x, y, z, w]$ and 
a triangular automorphism $T$ of $k[x, y, z, w]$ such that 
$\tau = \varphi \circ T \circ \varphi^{-1}$ and 
$T$ has one of the above forms {\rm (i)}, {\rm (ii)}, {\rm (iii)}. 
\end{enumerate} 

\end{enumerate} 
\end{thm}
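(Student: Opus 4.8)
Assertion (1) is a direct verification that I would carry out form by form, the mechanism in each case being that the relevant building blocks are $T$-invariant, so the shifts cancel in characteristic two. For (i), $T^2(w) = w + 2f = w$. For (ii), $T^2(z) = z + 2\xi = z$, and since $(z+\xi)^2 + \xi(z+\xi) = z^2 + \xi z$ the argument of $\eta$ is unchanged, giving $T^2(w) = w + 2\eta(\dots) = w$. For (iii), the key point is that $T(f_i) = f_i$ for $i = 1,2,3,4$; the only nonobvious case is $T(f_4) = a(z+\beta) + b(y+\alpha) = f_4 + (a\beta + b\alpha)$, which equals $f_4$ because $a\beta = b\alpha = \alpha\beta/d$ and $2 = 0$. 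The $T$-invariance of the $f_i$ then forces $T^2(y) = y + 2\alpha$, $T^2(z) = z + 2\beta(\dots)$ and $T^2(w) = w + 2\gamma(\dots)$, all equal to the identity. Granting (1), the implication $(2.2)\Rightarrow(2.1)$ is immediate, since $\tau = \varphi\circ T\circ\varphi^{-1}$ gives $\tau^2 = \varphi\circ T^2\circ\varphi^{-1} = \id$.

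The substance is $(2.1)\Rightarrow(2.2)$. First I would normalize the diagonal: writing $\tau(x_i) = \lambda_i x_i + \phi_i$ with $\phi_i$ in the lower variables, comparing the coefficient of $x_i$ in $\tau^2(x_i) = x_i$ gives $\lambda_i^2 = 1$, hence $(\lambda_i - 1)^2 = 0$ and $\lambda_i = 1$ since $\mathrm{char}\,k = 2$. So $\tau(x) = x+c$, $\tau(y) = y+\phi_2$, $\tau(z) = z+\phi_3$, $\tau(w) = w+\phi_4$ with $c\in k$, $\phi_2\in k[x]$, $\phi_3\in k[x,y]$, $\phi_4\in k[x,y,z]$, and a short computation shows that $\tau^2 = \id$ is equivalent to the four conditions $\tau(\phi_i) = \phi_i$; that each shift is $\tau$-invariant is the reformulation I would use throughout.

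I would then run a case analysis on the lowest variable genuinely moved by $\tau$. If $c = 0$ and $\phi_2 = \phi_3 = 0$, then $\tau$ is literally of form (i) with $f = \phi_4$. If $c = \phi_2 = 0$ and $\phi_3\neq 0$, then $\tau(\phi_4) = \phi_4$ reads $\phi_4(x,y,z+\phi_3) = \phi_4(x,y,z)$; identifying the invariant ring of $z\mapsto z+\phi_3$ over $k[x,y]$ as $k[x,y][z^2+\phi_3 z]$ (a one-line module computation using that $k[x,y][z]$ is free over it with basis $\{1,z\}$ and $(1+\sigma)(p+qz) = \phi_3 q$) yields $\phi_4 = \eta(x,y,z^2+\phi_3 z)$, so $\tau$ is of form (ii). If $c = 0$ and $\phi_2 = \alpha\neq 0$, the same computation first forces $\phi_3 = \beta(x,y^2+\alpha y)$ for a unique $\beta$: when $\phi_3\neq 0$ this is exactly form (iii), and when $\phi_3 = 0$ the coordinate swap $y\leftrightarrow z$ conjugates $\tau$ into form (ii). Finally, if $c\neq 0$ the involution is fixed-point-free; here over the coefficient ring of the lower variables the polynomial extension by $x$ is free of rank two over its $\langle x\mapsto x+c\rangle$-invariants with basis $\{1,x\}$, and $(1+\sigma)(p+qx) = cq$, which is surjective onto the invariants because $c$ is a unit. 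Hence each invariant shift $\phi_2,\phi_3,\phi_4$ is a coboundary and can be stripped off by successive triangular conjugations $y\mapsto y+g$, $z\mapsto z+h$, $w\mapsto w+\ell$; this reduces $\tau$ to $x\mapsto x+c$ with $y,z,w$ fixed, and the swap $x\leftrightarrow w$ turns it into form (i) with $f = c$.

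The heart of the argument, and the step I expect to be the main obstacle, is establishing form (iii): computing $k[x,y,z]^\sigma$ for $\sigma\colon(x,y,z)\mapsto(x,\,y+\alpha,\,z+\beta)$ with $\beta = \beta(x,y^2+\alpha y)\neq 0$ and showing it is generated by $f_1,f_2,f_3,f_4$. My plan is to present $R = k[x,y,z]$ as a free module of rank four over $S_0 := k[x,f_2,f_3]$ with basis $\{1,y,z,yz\}$; expanding the invariance of $p_0 + p_1 y + p_2 z + p_3 yz$ then forces $p_3 = 0$ and the single syzygy $p_1\alpha + p_2\beta = 0$. Writing $d = \GCD(\alpha,\beta)$, $a = \alpha/d$, $b = \beta/d$ with $\GCD(a,b) = 1$, the syzygy has general solution $(p_1,p_2) = s\,(b,a)$, $s\in S_0$, so the invariant is $p_0 + s(by+az) = p_0 + s\,f_4$. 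Thus $R^\sigma = S_0 + S_0 f_4 = k[f_1,f_2,f_3,f_4]$ (the relation $f_4^2 = b^2 f_2 + a^2 f_3 + abd\,f_4$ keeps this closed under multiplication), and the invariant shift $\phi_4$ lies in it, giving $\phi_4 = \gamma(f_1,f_2,f_3,f_4)$ and $\tau$ of form (iii). Making this syzygy computation go through over $k[x]$ rather than over $k(x)$—clearing denominators so that $f_4 = b\,y + a\,z$, and not $z-(\beta/\alpha)y$, is the correct integral generator—is exactly where the $\GCD$ bookkeeping is delicate, and is the part I would write out most carefully.
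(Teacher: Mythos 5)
Your proof is correct, and its skeleton matches the paper's (normalize the diagonal to $1$, restate $\tau^2 = \id$ as $\tau$-invariance of the shifts $\phi_i$, then split into cases according to which shifts vanish), but you carry out the two decisive steps by genuinely different arguments. For the heart --- computing $k[x,y,z]^\sigma$ for $\sigma(x,y,z) = (x, \, y+\alpha, \, z + \beta(x, y^2+\alpha y))$ --- the paper's Lemma 3 runs a downward induction on $\deg_z$: the leading coefficient $\lambda_d$ is invariant, the coefficient of $z^{d-1}$ forces $\lambda_d \in a\,k[x,y]^\sigma$ when $d$ is odd, and one subtracts $\lambda_d f_3^m$ or $\mu_d f_3^{\ell-1} f_4$ and repeats. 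You instead exhibit $k[x,y,z]$ as the free module $S_0 \oplus S_0 y \oplus S_0 z \oplus S_0 yz$ over $S_0 = k[f_1, f_2, f_3]$ and solve the single syzygy $p_1 \alpha + p_2 \beta(x, f_2) = 0$; this yields the sharper structural statement $k[x,y,z]^\sigma = S_0 \oplus S_0 f_4$ in one pass, at the price of the $\GCD$ transfer you flag yourself: coprimality of $a$ and $b$ must survive the substitution $y \mapsto f_2$, which holds because $k[x,y] \to k[x,f_2]$, $y \mapsto f_2$, is a ring isomorphism and primes of $k[x,f_2]$ stay prime in the polynomial extension $S_0$ --- a point your write-up should indeed make explicit, whereas the paper's induction avoids it entirely. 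Second, for $\tau(x) = x + c$ with $c \neq 0$, the paper conjugates once by $\psi(x) = y - (\phi_2/\phi_1)x$, $\psi(y) = x/\phi_1$, reaching an $x$-fixing involution with $y \mapsto y+1$ and feeding it back into the remaining cases (so this case lands in forms (ii) or (iii)); you instead use that $c$ is a unit to make $1 + \sigma$ surjective onto the invariants, strip $\phi_2, \phi_3, \phi_4$ by three coboundary conjugations, and land in form (i) after swapping $x$ and $w$. Both are valid --- the three forms overlap up to conjugation --- with the paper's one-shot conjugation being shorter, while your route makes transparent why a translation by a unit can never obstruct full linearization.
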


We mention here that the above theorem can be immediately obtained from the articles \cite{Tanimoto 2018, Tanimoto 2021}. 
But the theorem might be hidden.  
In this note, we explicitly write the theorem and give its short proof.

%

\setcounter{section}{0}

\section{Proof of (1)}

We prove assertion (1) to each of the three forms (i), (ii), (iii) of $T$. 
\begin{enumerate}[label = {\rm (\roman*)}]
\item Note $T(f) = f$. So, $T$ is an involution.

\item Note $T(\xi) = \xi$ and 
$T(\eta(x, \, y, \, z^2 + \xi \, z)) = \eta(x, \, y, \, z^2 + \xi \, z)$. 
So, $T$ is an involution.

\item Note $T(f_i) = f_i$ for all $1 \leq i \leq 4$. 
So, $T$ is an involution. 

\end{enumerate}

\section{Proof of (2)}

\subsection{Proof of the implication $(2.1) \Longrightarrow (2.2)$}

\begin{lem}
Let $\tau$ be a triangular involution of $k[x, y, z, w]$. 
Then there exists an automorphism $\varphi$ of $k[x, y, z, w]$ such that 
$\varphi^{-1} \circ \tau \circ \varphi$ has the following form: 
\begin{align*}
\left\{
\begin{array}{c @{\,} l }
 (\varphi^{-1} \circ \tau \circ \varphi)(x) & = x , \\
 (\varphi^{-1} \circ \tau \circ \varphi)(y) & = y + \phi_2' , \\
 (\varphi^{-1} \circ \tau \circ \varphi)(z) & = z + \phi_3' , \\
 (\varphi^{-1} \circ \tau \circ \varphi)(w) & = w + \phi_4', 
\end{array} 
\right. 
\end{align*}
where $\phi_2' \in k[x]$, $\phi_3' \in k[x, y]$, $\phi_4' \in k[x, y, z]$, and 
$\phi_2'$, $\phi_3'$ satisfy one of the following conditions: 
\begin{enumerate}[label = {\rm (\arabic*)}]
\item $\phi_2' = \phi_3' = 0$. 

\item $\phi_2' = 0$ and $\phi_3' \ne 0$. 

\item $\phi_2' \ne 0$ and $\phi_3' \ne 0$. 
\end{enumerate} 
\end{lem}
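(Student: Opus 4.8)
The plan is to take a general triangular involution $\tau$ with
$\tau(x)=\lambda_1 x+\phi_1$, $\tau(y)=\lambda_2 y+\phi_2$, $\tau(z)=\lambda_3 z+\phi_3$, $\tau(w)=\lambda_4 w+\phi_4$ (with $\lambda_i\in k\setminus\{0\}$, $\phi_1\in k$, $\phi_2\in k[x]$, $\phi_3\in k[x,y]$, $\phi_4\in k[x,y,z]$) and first exploit the condition $\tau^2=\id$ variable by variable, reducing to the normalized form above by an affine (indeed triangular linear) change of coordinates $\varphi$. I would begin with the bottom variable $x$: since $\tau^2(x)=\lambda_1^2 x+(\lambda_1+1)\phi_1=x$, in characteristic two we get $\lambda_1^2=1$, hence $\lambda_1=1$, and then $(\lambda_1+1)\phi_1=0$ forces nothing further, but the constant $\phi_1$ can be absorbed by the translation $x\mapsto x+\phi_1/(\lambda_1+\cdots)$—more carefully, since $\lambda_1=1$ we must have $\phi_1=0$ after conjugating by a suitable translation, giving $\tau(x)=x$ in the normalized coordinates.

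Next I would climb to $y$, $z$, $w$ in turn. For each variable the involution condition $\tau^2=\id$ yields a relation of the shape $\lambda_i^2=1$ (so $\lambda_i=1$ in characteristic two) together with a cocycle-type identity $\phi_i+\tau(\phi_i)=0$, i.e. $\tau(\phi_i)=\phi_i$ since $1+1=0$. The key structural point is that, having arranged $\tau(x)=x$, the map $\tau$ restricts to an involution of each subring $k[x,\ldots,x_{i-1}]$, so the fixed-polynomial conditions and the triangular shape are compatible. I would then use triangular coordinate changes—replacing $y$ by $y+c$ for suitable $c\in k[x]$, etc.—to kill the leading coefficients and reduce $\lambda_i$ to $1$ and clean up $\phi_i$, exactly as in the three-variable treatment in \cite[Theorem 3.1]{Tanimoto 2018} and the inductive construction in \cite{Tanimoto 2021}. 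The conjugation $\varphi$ is built as a composite of these elementary triangular substitutions, and one checks it preserves triangularity so that $\varphi^{-1}\circ\tau\circ\varphi$ is again triangular of the stated normalized form with $\phi_2'\in k[x]$, $\phi_3'\in k[x,y]$, $\phi_4'\in k[x,y,z]$.

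Finally I would organize the case analysis on $(\phi_2',\phi_3')$ into the three listed possibilities. The only combination that must be excluded is $\phi_2'\neq 0$ with $\phi_3'=0$: I expect this case to collapse into case (2) after a further coordinate change, because when $\phi_2'\neq0$ one can use the nontrivial action on $y$ to conjugate $z$ into a form with $\phi_3'\neq0$, or else the involution condition on $\phi_3'$ together with $\tau(y)=y+\phi_2'$ forces a compatibility that can be rearranged. Concretely, the constraint is that $\phi_3'$ must be $\tau$-invariant, and when $\phi_2'\neq0$ a nonzero such $\phi_3'$ can always be manufactured (or the reduction shows the case with $\phi_3'=0$ is coordinate-equivalent to one of the listed cases), so that cases (1), (2), (3) are exhaustive.

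The main obstacle will be the bookkeeping for the variable $w$ and, more subtly, verifying that the successive triangular conjugations used to normalize $\lambda_i$ and simplify $\phi_i$ do not destroy the normalizations already achieved at lower variables. The interaction between $\tau(y)=y+\phi_2'$ and the fixed-polynomial condition $\tau(\phi_3')=\phi_3'$ is the delicate point: one must show that the invariance $\tau(\phi_3')=\phi_3'$ is automatically satisfiable in a way that lands in exactly one of the three admissible configurations, which is precisely where the ruled-out case $(\phi_2'\neq0,\phi_3'=0)$ has to be argued away rather than merely asserted.
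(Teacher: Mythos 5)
There is a genuine gap, and it sits at the heart of the lemma: your treatment of the case $\phi_1\neq 0$ is wrong. You claim that once $\lambda_1=1$ the constant $\phi_1$ "can be absorbed by a suitable translation," but this is precisely the characteristic-zero trick (translate by $\phi_1/(\lambda_1-1)$) that evaporates in characteristic two. If $\tau(x)=x+\phi_1$ with $\phi_1\neq 0$ and $\varphi$ is any automorphism with $\varphi(x)=ax+b$ ($a\in k\setminus\{0\}$, $b\in k$), then $\tau(\varphi(x))=\varphi(x)+a\phi_1$, so $(\varphi^{-1}\circ\tau\circ\varphi)(x)=x+a\phi_1\neq x$: no translation, scaling, or indeed any triangular conjugation whatsoever can make the conjugate fix $x$. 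This also means your self-imposed requirement that the conjugating automorphisms stay triangular dooms the approach — the lemma only asks for an automorphism $\varphi$, and it must be non-triangular here. The paper's resolution is the one idea your proposal is missing: when $\phi_1\neq 0$, conjugate by $\psi$ with $\psi(x):=y-(\phi_2/\phi_1)\,x$, $\psi(y):=(1/\phi_1)\,x$, $\psi(z):=z$, $\psi(w):=w$. The involution condition $\tau^2(y)=y$ forces $\phi_2(x+\phi_1)=\phi_2(x)$, which makes $y-(\phi_2/\phi_1)\,x$ a $\tau$-invariant; taking it as the new bottom variable, the conjugate fixes $x$ and the unremovable translation is pushed up to the $y$-variable, where $(\psi^{-1}\circ\tau\circ\psi)(y)=y+1$ is perfectly admissible ($\phi_2'=1\neq 0$). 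One then finishes by applying the $\phi_1=0$ analysis to $\psi^{-1}\circ\tau\circ\psi$.

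The second, smaller gap is the one you yourself flag: ruling out the configuration $\phi_2'\neq 0$, $\phi_3'=0$. Your sketch ("manufacture a nonzero $\phi_3'$ using the action on $y$") points in the wrong direction; the paper's argument is a one-line swap. If $\tau(x)=x$, $\tau(y)=y+\phi_2$ with $\phi_2\in k[x]\setminus\{0\}$, and $\tau(z)=z$, conjugate by the automorphism exchanging $y$ and $z$ (fixing $x$ and $w$). Because $\phi_3=0$ and $\phi_2$ involves only $x$, the conjugate is still of the normalized form, now with $\phi_2'=0$, $\phi_3'=\phi_2\neq 0$, and $\phi_4'(x,y,z)=\phi_4(x,z,y)\in k[x,y,z]$ — i.e., case (c) lands in case (2). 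So both missing steps are genuinely needed and both require explicit coordinate changes (one of them non-triangular), not just the cocycle bookkeeping $\tau(\phi_i)=\phi_i$ that your proposal develops correctly.
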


\begin{proof} 
Since $\tau$ is a triangular involution, we can express $\tau$ as 
\begin{align*}
\left\{
\begin{array}{c @{\,} l }
 \tau(x) & = x + \phi_1 , \\
 \tau(y) & = y + \phi_2 , \\
 \tau(z) & = z + \phi_3 , \\
 \tau(w) & = w + \phi_4, 
\end{array} 
\right. 
\end{align*}
where $\phi_1 \in k$, $\phi_2 \in k[x]$, $\phi_3 \in k[x, y]$, $\phi_4 \in k[x, y, z]$. 

If $\phi_1 = 0$, then $\tau(x) = x$ and one of the following cases can occur:  
\begin{enumerate}[label = {\rm (\alph*)}]
\item $\phi_2 = \phi_3 = 0$. 

\item $\phi_2 = 0$ and $\phi_3 \ne 0$. 

\item $\phi_2 \ne 0$ and $\phi_3 = 0$. 

\item $\phi_2 \ne 0$ and $\phi_3 \ne 0$. 
\end{enumerate} 
In each of the cases (a), (b), (d), let $\varphi := \id_{k[x, y, z]}$. 
In case (c), let $\varphi$ be the automorophism of $k[x, y, z, w]$ 
defined by $\varphi(x) := x$, $\varphi(y) := z$, $\varphi(z) := y$, 
$\varphi(w) := w$.

If $\phi_1 \ne 0$, define an automorphism $\psi$ of $k[x, y, z, w]$ as 
\[
\left\{
\begin{array}{c @{\,} l}
 \psi(x) & := y - (\phi_2/\phi_1) \cdot x, \\ 
 \psi(y) & := (1/\phi_1) \, x , \\
 \psi(z) & := z , \\
 \psi(w) & := w . 
\end{array}
\right.
\]
Then we have 
\begin{align*}
\left\{
\begin{array}{c @{\,} l }
 (\psi^{-1} \circ \tau \circ \psi)(x) & = x , \\
 (\psi^{-1} \circ \tau \circ \psi)(y) & = y + 1 , \\
 (\psi^{-1} \circ \tau \circ \psi)(z) & = z + \phi_3(\phi_1 \, y, \;  x + \phi_2 \, y) , \\
 (\psi^{-1} \circ \tau \circ \psi)(w) & = w + \phi_4(\phi_1 \, y, \; x + \phi_2 \, y, \; z) . 
\end{array} 
\right. 
\end{align*}
So, $\psi^{-1} \circ \tau \circ \psi$ has the above argued form.  
Thus we have an automorphism $\psi'$ of $k[x, y, z, w]$ so that 
$\psi'^{-1} \circ \psi^{-1} \circ \tau \circ \psi \circ \psi'$ has the 
desired form. 
\end{proof}

For any $k$-subalgebra $S$ of $k[x, y, z, w]$ and an automorphism $\sigma$ 
of $k[x, y, z, w]$, we denote by 
$S^\sigma$ the set of all elements $f \in S$ satifying $\sigma(f) = f$, i.e., 
\[
 S^\sigma := \{\, f \in S \mid \sigma(f) = f \,\} . 
\]
Clearly, $S^\sigma$ becomes a $k$-subalgebra of $S$.

Now, we prove the implication (2.1) $\Longrightarrow$ (2.2). 
Based on Lemma 2, we let $\tau'$ be the triangular involution of $k[x, y, z, w]$ 
defined by $\tau' := \varphi^{-1} \circ \tau \circ \varphi$. 

If $\tau'$ satisfies condition (1), we have $\phi_3' \in k[x, y]^{\tau'} = k[x, y]$ and $\phi_4' \in k[x, y, z]^{\tau'} 
= k[x, y, z]$. 
So, $\tau'$ has the form (i). 

If $\tau'$ satisfies condition (2), we have $\phi_3' \in k[x, y]^{\tau'} = k[x, y]$ 
and $\phi_4' \in k[x, y, z]^{\tau'} = k[x, \, y^2 + \phi_3' \, y]$. 
So, $\tau'$ has the form (ii).

If $\tau'$ satisfies condition (3), we have 
$\phi_3' \in k[x, y]^{\tau'} = k[x, \, y^2 + \phi_2' \, y ]$ and $\phi_4' \in k[x, y, z]^{\tau'}$. 
We can conclude by the following Lemma 3 that $\tau'$ has the form (iii).

\begin{lem}
If $T$ has the form {\rm (iii)}, then we have $k[x, y, z]^T = k[f_1, f_2, f_3, f_4]$. 
\end{lem}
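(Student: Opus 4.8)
The plan is to set $R := k[x,y,z]$ (on which $T$ restricts to an involution, since $T(x),T(y),T(z)\in R$) and to compute $R^T$ by exhibiting $R$ as a free module over the subring generated by the ``squared'' invariants $f_1,f_2,f_3$, after which the $T$-fixed elements can simply be read off. The inclusion $k[f_1,f_2,f_3,f_4]\subseteq R^T$ is already supplied by Section~1 (there we saw $T(f_i)=f_i$), so the whole content is the reverse inclusion.

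First I would record a preliminary fact about the coefficients. Since $\alpha\in k[x]\setminus\{0\}$ and each irreducible factor of $\alpha$ in $k[x]$ remains irreducible in $k[x,y]$, every divisor of $\alpha$ in $k[x,y]$ — in particular $d=\GCD_{k[x,y]}(\alpha,\beta)$ — may be taken in $k[x]$, whence $a=\alpha/d\in k[x]$ as well. Writing $\tilde b:=b(x,\,y^2+\alpha y)$ and $\tilde\beta:=\beta(x,\,y^2+\alpha y)=\tilde b\,d$, the crucial observation is that $\tilde\beta\in k[f_1,f_2]$. Now set $B:=k[f_1,f_2,f_3]$. Because $\alpha\in k[x]\subseteq B$ and $\tilde\beta\in k[f_1,f_2]\subseteq B$, the two relations $y^2+\alpha y=f_2$ and $z^2+\tilde\beta z=f_3$ are monic of degree two with coefficients in $B$; this shows simultaneously that $f_1,f_2,f_3$ are algebraically independent (so $B$ is a polynomial ring) and that $R$ is a free $B$-module with basis $\{\,1,\,y,\,z,\,yz\,\}$, while $B\subseteq R^T$ since each $f_i$ is fixed.

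Next I would compute the action of $T$ on this basis, namely $T(y)=y+\alpha$, $T(z)=z+\tilde\beta$ and $T(yz)=yz+\tilde\beta\,y+\alpha\,z+\alpha\tilde\beta$. Expanding a general element $p=p_0+p_1y+p_2z+p_3yz$ with $p_i\in B$ and equating coefficients against the free basis, the condition $T(p)=p$ forces $p_3=0$ together with $p_1\alpha+p_2\tilde\beta=0$. Substituting $\alpha=ad$ and $\tilde\beta=\tilde b\,d$ and cancelling $d\neq 0$, this collapses (in characteristic two) to the single relation $p_1a=p_2\tilde b$.

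The key step is then the coprimality argument. From $\GCD_{k[x,y]}(a,b)=1$, renaming $y$ as the independent variable $f_2$ gives $\GCD(a,\tilde b)=1$ in $k[f_1,f_2]$ and hence in $B$; therefore $p_1a=p_2\tilde b$ forces $p_1=\tilde b\,c$ and $p_2=a\,c$ for some $c\in B$. Consequently $p=p_0+c(\tilde b\,y+a\,z)=p_0+c\,f_4$, so $R^T\subseteq B+Bf_4\subseteq k[f_1,f_2,f_3,f_4]$, which combined with the reverse inclusion yields the claimed equality. I expect the main obstacle to be the structural bookkeeping that secures the freeness of $R$ over $B$ — above all the fact that $\tilde\beta\in k[f_1,f_2]$, so that $f_3$ genuinely lies in $B$ and $B$ is pointwise $T$-fixed; once the free basis $\{1,y,z,yz\}$ is in hand, the remainder is the routine coprimality extraction that isolates $f_4$.
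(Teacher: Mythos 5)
Your proof is correct, but it follows a genuinely different route from the paper's. The paper proves the nontrivial inclusion by descent on the $z$-degree: writing an invariant $f = \sum_i \lambda_i(x,y)\,z^i$, it deduces from $T(f)=f$ that the top coefficient lies in $k[x,y]^T$, and, when the $z$-degree is odd, in $a\,k[x,y]^T$ (this is where $\GCD(a,b)=1$ enters, hidden in the parenthetical about the coefficient of $z^{\deg - 1}$); subtracting $\lambda\, f_3^{m}$ or $\mu\, f_3^{\ell-1} f_4$ strictly lowers the $z$-degree, and the induction bottoms out at $k[x,y]^T=k[f_1,f_2]$, which the paper quotes as known. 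You instead make the module structure explicit: $k[x,y,z]$ is free of rank four over $B=k[f_1,f_2,f_3]$ with basis $\{1,y,z,yz\}$, $B$ is pointwise fixed, and invariance of $p_0+p_1y+p_2z+p_3yz$ reduces to the linear system $p_3\tilde\beta=0$, $p_1\alpha+p_2\tilde\beta=0$ (with $\tilde\beta=\beta(x,\,y^2+\alpha y)$, $\tilde b=b(x,\,y^2+\alpha y)$), which coprimality of $a$ and $\tilde b$ in the UFD $B$ solves as $(p_1,p_2)=c\,(\tilde b,a)$, i.e.\ exactly the $B$-multiples of $f_4$. Both arguments turn on the same two ingredients --- the quadratic Artin--Schreier-type relations behind $f_2,f_3$ and the coprimality of $a$ and $b$ --- but yours trades the induction for a one-shot linear-algebra computation, re-derives rather than quotes the two-variable fact $k[x,y]^T=k[f_1,f_2]$ (it is the rank-two case of your freeness claim), and yields the finer conclusion $k[x,y,z]^T=B\oplus B f_4$ as a $B$-module. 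What you pay is the bookkeeping you yourself flag: generation of $k[x,y,z]$ by $\{1,y,z,yz\}$ over $B$ does follow from the monic relations, but their $B$-linear independence needs one more line --- e.g.\ $[k(x,y,z):k(f_1,f_2,f_3)]=4$ because the involutions $y\mapsto y+\alpha$ (fixing $x,z$) and $z\mapsto z+\tilde\beta$ (fixing $x,y$) are nontrivial and independent, or a parity-of-leading-degree argument --- and your preliminary observation that $\GCD_{k[x,y]}(\alpha,\beta)$ and $a$ lie in $k[x]$ (by $y$-degree) is genuinely needed, both to write $\tilde\beta$ as that $\GCD$ times $\tilde b$ and to transport $\GCD(a,b)=1$ through the isomorphism $k[x,y]\cong k[f_1,f_2]$ sending $y$ to $f_2$. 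These are routine completions, not gaps.
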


\begin{proof} 
Clearly, $k[x, y, z]^T \supset k[f_1, f_2, f_3, f_4]$. 
We shall show the inclusion $k[x, y, z]^T \subset k[f_1, f_2, f_3, f_4]$. 
Take any polynomial $f$ of $k[x, y, z]^T$. 
Assume $\deg_z(f) \geq 1$ and express $f$ as 
\[
 f = \sum_{i = 0}^d \lambda_i(x, y) \, z^i , 
\qquad 
\lambda_i(x, y) \in k[x, y] \quad (\, 0 \leq i \leq d \,) , \qquad 
\lambda_d(x, y) \ne 0 . 
\]
Since 
\begin{align*}
T(f) 
 & = \sum_{i = 0}^d \lambda_i(x, \, y + \alpha) \, \bigl( \, z + \beta(x, \, y^2 + \alpha \, y) \, \bigr)^i \\
 & = 
\lambda_d(x, \, y + \alpha) \, z^d \\
& \qquad 
 + \bigl( \, 
 d \, \lambda_d(x, \, y + \alpha)  \, \beta(x, \, y^2 + \alpha \, y) 
 +
 \lambda_{d - 1}(x, \, y + \alpha) 
 \bigr) \, z^{d - 1} \\
 & \qquad 
 + (\text{ lower order terms in $z$})  
\end{align*}
and since $T(f) = f$, 
the following assertions {\rm (1)} and {\rm (2)} hold true: 
\begin{enumerate}[label = {\rm (\arabic*)}]
\item $\lambda_d(x, y) \in k[x, y]^T$. 

\item If $d$ is an odd number, then $\lambda_d(x, y) \in a \, k[x, y]^T$ 
(consider the coefficient of $z^{d - 1}$ in $T(f) - f$). 
\end{enumerate} 

In the case where $d$ is an odd number, write $d = 2 \, \ell - 1$ $(\ell \geq 1)$ 
and $\lambda_d(x, y) = a \, \mu_d(x, y)$ $(\mu_d(x, y) \in k[x, y]^T)$. 
Then 
\begin{align*}
\deg_z(f) > \deg_z\bigl( \, f - \mu_d(x, y) \, f_3^{\ell - 1} \, f_4  \, \bigr)
\end{align*}
and 
\begin{align*} 
\mu_d(x, y) \, f_3^{\ell - 1} \, f_4 \in k[f_1, f_2, f_3, f_4]. 
\end{align*} 

In the case where $d$ is an even number, write $d = 2 \, m$ $(m \geq 1)$. 
Then 
\begin{align*}
 \deg_z(f)  >  \deg_z\bigl( \, f - \lambda_d(x, y) \, f_3^m \, \bigr) 
\end{align*}
and 
\begin{align*} 
 \lambda_d(x, y) \, f_3^m \in k[f_1, f_2, f_3, f_4] . 
\end{align*}

We can repeat the above argumets until we have 
a polynomial $g$ of $k[f_1, f_2, f_3, f_4]$ so that 
\[
 f - g \in k[x, y]^T = k[ f_1, f_2] , 
\]
which implies $f \in k[f_1, f_2, f_3, f_4]$.

\end{proof}

We mention that Lemma 3 is a special case of Lemma 4.2 given in 
\cite{Tanimoto 2018}. 
But the above proof is shorter than the proof of Lemma 4.2.

\subsection{Proof of the implication $(2.2) \Longrightarrow (2.1)$}

By assertion (1), the triangular automorphism $T$ is an involution. 
Thus $\tau$ is also an involution.

\end{document}